\author{Matthew Towers}
\title{Hochschild cohomology of $U(\ssl_2)$}
\email{m.towers@ucl.ac.uk}
\date{\today}
\newcommand{\ext}{\operatorname{Ext}}
\newcommand{\ssl}{\mathfrak{sl}}
\newcommand{\im}{\operatorname{im}}
\newcommand{\HH}{\operatorname{HH}}
\renewcommand{\gg}{\mathfrak{g}}
\theoremstyle{plain}
\newtheorem{thm}{Theorem}[section]
\newtheorem{lem}[thm]{Lemma}
\newtheorem{cor}[thm]{Corollary}
\newtheorem{prop}[thm]{Proposition}
\begin{document}

\begin{abstract}
Let $k$ be an algebraically closed field of characteristic $p>2$. We
determine the Hochschild cohomology of $U(\ssl_2(k))$ and the invariants
of $\ssl_2(k)$ and $\mathsf{SL}_2(k)$ in the adjoint action on the
divided power algebra $D(\ssl_2(k))$.
\end{abstract}

\maketitle

\section{Introduction}
Let $\gg$ be the Lie algebra of a semisimple algebraic group $G$ over a
field of characteristic $p>0$. There are several associative algebras
associated to $\gg$: the universal enveloping algebra $U(\gg)$ and its
commutative analogue the symmetric algebra $S(\gg)$, the hyperalgebra
$\operatorname{Dist}(G)$ \cite[I.7]{J} and its commutative analogue the algebra of divided powers
$D(\gg)$, and the restricted enveloping algebra $u(\gg)$ which is a
quotient of $U(\gg)$ and a subalgebra of $\operatorname{Dist}(G)$.  Many homological
questions about these algebras are still unanswered: for example, while the
$G$-invariants, and $\gg$-invariants of $U(\gg)$ and $S(\gg)$ are known
\cite{veldkamp}, the higher Hochschild cohomology groups are not. Even
the centres of the restricted enveloping algebra and the hyperalgebra
are not known in general.

First Hochschild cohomology of $U(\gg)$, which equals the space of outer
derivations of $U(\gg)$, is of particular interest. The work of Riche
\cite{Riche} shows that the restricted
enveloping algebras are Koszul,
which implies that there is a $\mathbb{Z}_{\geq 0}$ grading on these
algebras. This leads to a derivation multiplying the degree $i$
component by $i$: does this extend to a derivation of
$\operatorname{Dist}(G)$, or lift
to a derivation of $U(\gg)$?  

The aim of this paper is to compute the Hochschild cohomology in the
simplest case, when $\gg=\ssl_2(k)$.  In Section 5 we give a 
description of the Hochschild cohomology groups, including the
dimensions of their graded pieces and their structure as a module over the
zeroth Hochschild cohomology.  In section 6 we study invariants on the
divided power algebra $D(\ssl_2(k))$, which turns out to be related to
the top Hochschild cohomology group. We compute the dimensions of the
graded pieces of the $\gg$-invariants and show
that the $G$-invariants have the same Hilbert series as $S(\gg)^G$.

From now on we let $k$ be a field, $\gg = \ssl_2(k)$ and $U=U(\gg)$ be
its universal enveloping algebra.  The following method of describing
the Hochschild (co)homology groups of $U$ is well-known, e.g.
\cite[Proposition 5]{kassel}, \cite[XIII]{CE}.
The universal enveloping algebra $U$ is a Hopf algebra with antipode
$\lambda(x)=-x$ and comultiplication $\Delta(x)=x\otimes 1 + 1 \otimes x$
for $x \in \gg$. Let $U^e = U \otimes_k
U^{\textrm{op}}$ be the enveloping algebra of $U$, so there is an
algebra homomorphism $(1\otimes \lambda) \circ \Delta : U \to U^e$ making
$U^e$ into a free $U$-module (on the generators $b \otimes 1$ for
$b$ in a PBW basis of $U$).  The induced module $k \mathord\uparrow _U
^{U^e}$ is isomorphic to $U$ and the restricted module $U|^{U^e}_U$ is
the adjoint module $U^{\textrm{ad}}$, so the Eckmann-Shapiro Lemma gives
\begin{equation}\label{ES}
\HH^*(U) = \ext^*_{U^e}(U,U) \cong \ext^*_{U^e}(k \mathord\uparrow
^{U^e}_U,U) \cong \ext^*_U(k, U^{\textrm{ad}}).
\end{equation}

If $k$ has characteristic zero, the structure of $\HH^*(U)$ follows
immediately: in that case the Whitehead lemma \cite[7.8.9]{weibel} says that $\ext^*_U(k,L)=0$ whenever $L$ is a
nontrivial simple $U$-module,
so \[\HH^*(U) \cong \ext^*_U(k,k)
	\otimes_k Z(U)=\ext^*_U(k,k)\otimes_k k[\tilde c]\]  where
$\tilde c$ is the
Casimir element of $U$ and
$\ext^*_U(k,k)$ is an exterior algebra with one generator of degree 3
arising from the Killing form.

To calculate the Hochschild cohomology in the case when the characteristic
of $k$ is greater than two we will use the isomorphisms
\begin{equation*}
	\HH^*(U) \cong \ext^*_U(k,U^{\textrm{ad}}) \cong \ext^*_U(k, S)
\end{equation*}
where $S$ is the symmetric algebra on $\gg$, the first isomorphism is
from (\ref{ES}) and the second is from the isomorphism of $U$-modules
$U^{\textrm{ad}}\cong S$ of \cite{FP}.

\subsection{Notation}
Let $k$ be a field of characteristic $p>2$ and $$e= \begin{pmatrix}
    0&1\\0&0
\end{pmatrix} ,h= \begin{pmatrix}
    1&0\\0&-1
\end{pmatrix}, f= \begin{pmatrix}
    0&0\\1&0
\end{pmatrix}$$ be the usual
basis of $\gg=\ssl_2(k)$.  Let $S^n$ be the $n$th symmetric power of
the adjoint $\gg$-module $\gg$ and $S = \bigoplus_{n\geq 0}S^n$ the symmetric
algebra on the adjoint $\gg$-module. Let $c = h^2 + 4ef \in S^\gg$, let
$Z$ be the subalgebra of $S$ generated by $c, e^p, h^p, f^p$ so that
$Z=S^\gg$ and let $Z_0$ be the subalgebra of $Z$ generated by $e^p, f^p$
and $h^p$.  

Let $L(r)$ be the simple $\gg$-module with dimension
$r+1$.  The natural module $L(1)$ has a basis $x,y$ with $h\cdot x = x,
h\cdot y = -y$, and its $n$th symmetric power is the dual Weyl module
$\nabla(n)$. We define $\nabla= \bigoplus_{n\geq 0}\nabla(2n)$, a
subalgebra of $k[x,y]$.  The modules $S^n$ and $\nabla(n)$ all admit
\emph{weight gradings}: $\mathbb{Z}$-gradings such that $e,h,f$ have
degrees $2,0,-2$ and $x,y$ have degrees $1,-1$ respectively, and such
that $h$ acts by multiplication by $\lambda$ on the homogeneous
component of degree $\lambda$.

Let $G= \mathsf{SL}_2(k)$ and $G_1$ be its first Frobenius kernel; the
modules $S^n$, $L(r)$ and $\nabla(n)$ are also $G$ and $G_1$-modules.
If $M$ is a $G$-module we write $M^F$ for its first Frobenius twist.

\subsection{The standard resolution}\label{stdres}
There is a resolution of the trivial $U$-module
\begin{equation*}
	0 \to U\otimes_k \wedge^3  \gg \stackrel {\delta_3}\to U\otimes_k
	\wedge ^2 \gg \stackrel {\delta_2} \to U
	\otimes_k \gg \stackrel{\delta_1} \to U \to  k \to 0
\end{equation*}
in which the differentials are given by
\begin{multline*}
	\delta_n (1\otimes x_1 \wedge \cdots \wedge x_n)=\sum_i
	(-1)^{i+1} x_i \otimes x_1\wedge \cdots \wedge \widehat{x_i}
	\wedge \cdots \wedge x_n  \\ + \sum_{i<j} (-1)^{i+j}\otimes
	[x_i,x_j] \wedge x_1 \wedge \cdots \wedge \widehat{x_i} \wedge
	\cdots \wedge \widehat{x_j} \wedge \cdots \wedge x_n
\end{multline*}
for $n \geq 1$ and $\delta_1(1\otimes x)=x$ for $x, x_i \in \gg$. Any
cocycles in this paper are defined on this resolution.

Module homomorphisms from the degree $i$ term of this resolution to a
$\gg$-module $M$ correspond to linear maps $\wedge^i\gg \to M$. Under
this correspondence, 1-cocycles are linear maps $\alpha : \gg \to M$
such that
\begin{equation}\begin{aligned}\label{1cocycle}
	e\cdot \alpha(f)-f\cdot \alpha(e) &= \alpha(h) \\
(h-2)\cdot \alpha(e)&=e\cdot \alpha(h) \\
(h+2)\cdot \alpha(f) &=f\cdot \alpha(h)
\end{aligned}\end{equation}
and 1-coboundaries are maps $\gg \to M$ of the form
\begin{equation}\label{1coboundary}
	\delta_m(r) = r \cdot m
\end{equation}
for $m\in M$.
2-cocycles are maps $\alpha : \wedge^2\gg \to M$ such that
\begin{equation}\label{2cocycle}
	e\cdot \alpha(h\wedge f)+f\cdot\alpha(e\wedge h) = h\cdot
\alpha(e\wedge f),
\end{equation}
and 2-coboundaries are spanned by maps of the form $r_m, s_m, t_m$ for
$m\in M$, where
\begin{equation}\label{2bdry}
\begin{array}{lll}
r_m(e\wedge h)=(2-h)\cdot m& r_m(e\wedge f)=-f\cdot m&r_m(h\wedge f)=0  \\
 s_m(e\wedge h)=0 & s_m(e\wedge f)=e\cdot m& s_m(h \wedge f)=(2+h)\cdot m\\
t_m(e\wedge h)=e\cdot m& t_m(e\wedge f)=-m&t_m(h\wedge f) = -f\cdot m .
\end{array}
\end{equation}
When we identify $\hom_U(U\otimes \wedge^3 \gg,M)$ with $M$, the space
of boundaries becomes $\gg\cdot M$ and so 
$\ext^3_U(k,M)$ can be identified with $M/\gg\cdot M \cong \hom_U(M,k)^*$.

\section{The long exact sequence}\label{les_sec}
In this section we derive a long exact sequence that will allow us to
compute the Hochschild cohomology groups $\ext^m_U(k,S)$.

The algebra map $\phi: S \to \nabla$ determined by $\phi(e)=-x^2/2,
\phi(h)=xy, \phi(f)=y^2/2$ is a homomorphism of $G$- (and hence $\gg$-)
modules, and $\ker\phi$ is the ideal generated by $c$ so we get
for each $n$ a short exact sequence of $G$-modules
\begin{equation}\label{SSnabla}
	0 \to S^{n-2} \stackrel{c}\to S^n \stackrel{\phi}\to \nabla(2n) \to 0.
\end{equation}
This gives rise to an action of $Z_0$ on $\nabla$, hence on the
spaces of cocycles and coboundaries for the standard resolution with
values in $\nabla$ and 
the cohomology groups $\ext^*_U(k,\nabla)$. The generators $e^p$,
$h^p$ and $f^p$ of $Z_0$ act on $\nabla$ via multiplication by $-x^{2p}/2, x^py^p$,
and $y^{2p}/2$ respectively.

Applying $\hom_U(k,-)$ to (\ref{SSnabla}) leads to the long exact sequence
\begin{equation}\label{les}
	\cdots \to \ext^i_U(k,S^{n-2}) \stackrel{c}\to \ext^i_U(k,S^n)
\stackrel{\phi}\to \ext^i_U(k,\nabla(2n)) \stackrel{\omega_i}\to
\ext^{i+1}_U(k,S^{n-2}) \to \cdots
\end{equation}
where $\omega_i$ is the $i$th connecting homomorphism. Our strategy for
computing $\ext^*_U(k,S)$ is to determine the cohomology of the
dual Weyl modules and its $Z_0$-module structure, then to use this long
exact sequence. In particular we will show that the connecting
homomorphisms are almost always zero.

\section{Cohomology of dual Weyl modules} \label{dualweyl}
Let $2n=qp+r$ with $0\leq r<p$. The dual of
(1.5)(1) in \cite{K} is a
non-split exact sequence of $G$-modules
\begin{equation}\label{nabla_structure}
	0 \to \nabla(r) \otimes \nabla(q)^F \to \nabla (2n) \to
\nabla(p-2-r)
\otimes \nabla(q-1)^F \to 0.
\end{equation}
As $\mathfrak{g}$-modules, the outer terms of the short exact
sequence are isomorphic to $L(r)^{\oplus (q+1)}$ and $L(p-2-r)^{\oplus q}$
respectively. We will use the long exact sequence obtained by applying
$\hom_U(k,-)$ to (\ref{nabla_structure}) to compute
$\ext_U^*(k,\nabla(2n))$, so we need the cohomology of the simple
$\gg$-modules.

\begin{prop}\label{cohom_simples}
Let $0 \leq r < p$. As $G$-modules 
\begin{equation*}\ext^m_U(k,L(r)) \cong \begin{cases}
k & r=0, m=0, 3\\
\nabla(1)^F& r=p-2, m=1,2\\
0 & \text{otherwise.}
  \end{cases} 
\end{equation*}
\end{prop}

This is a straightforward computation using the standard resolution. We
can now find the $U$-cohomology of the dual Weyl modules. The 
cohomology groups $\ext^1_{G_1}(k,\nabla(2n))$ for the first Frobenius
kernel, or equivalently, for $\gg$ as a restricted Lie algebra,
were determined in \cite{K}.
They are the subspaces of our
groups corresponding to restricted extensions.

\begin{thm}\label{extnabla}
Suppose $qp+r$ is even and $0\leq r<p$. Then
$\ext^m_U(k, \nabla (qp+r))$ is zero unless
$r=0$ or $p-2$, and as $G$-modules
\begin{align*}
    \ext_U^m(k, \nabla (qp)) &\cong\begin{cases}
		\nabla(q)^F & m=0 \\
\nabla(q-1)^F\otimes \nabla(1)^F  & m=1, q\geq 2 \\
\nabla(q-2)^F & m=2, q \geq 2 \\
k & m=3, q=0 \\
0 & \text{otherwise.}
	\end{cases} \\
    \ext_U^m(k,\nabla (qp+p-2)) &\cong \begin{cases}
		\nabla(q+1)^F & m=1 \\
\nabla(q)^F\otimes\nabla(1)^F & m=2 \\
		\nabla(q-1)^F & m=3 \\
0 & \text{otherwise}.
	\end{cases}
\end{align*}
\end{thm}
%

\begin{proof}
The long exact sequence obtained by applying
$\hom_U(k,-)$ to (\ref{nabla_structure}) is
\begin{multline*}
\cdots\to \ext^m_U(k, \nabla (r) \otimes \nabla (q)^F)
\to \ext^m_U (k,\nabla (qp+r)) 
\to \ext^m_U(k, \nabla (p-2-r) \otimes \nabla(q-1)^F)  \\
\to \ext^{m+1}_U(k, \nabla (r) \otimes \nabla (q)^F) 
\to \cdots
\end{multline*}

This and Proposition \ref{cohom_simples} imply that the
Ext groups vanish for $r \neq 0,p-2$ and the statement is correct for $q=0$.  The result 
for $m=0$ follows from (\ref{nabla_structure}).

\begin{itemize}
\item Suppose $r=0,m=1$. The first $\ext^1$ and $\ext^2$ groups appearing in the long
exact sequence are zero by Proposition \ref{cohom_simples}, so
\begin{align*}
	\ext^1_U(k,\nabla(2n))  &\cong \ext^1_U(k, \nabla(p-2)\otimes
\nabla(q-1)^F ) \\
&\cong \ext^1_U(k,L(p-2))\otimes \nabla(q-1)^F
\end{align*}
since $U$ acts trivially on $\nabla(q-1)^F$. By Proposition
\ref{cohom_simples}, $\ext^1_U(k,\nabla(qp)) \cong \nabla(1)^F
\otimes \nabla (q-1)^F$.
\item Suppose $r=0,m=3$.
Recall from \S \ref{stdres} that
$\ext^3_U(k,\nabla(qp))\cong \nabla(qp)/\mathfrak{g}\cdot
\nabla(qp)$. The action of $\gg$ on $\nabla$ is \begin{align*}
h\cdot x^ay^b& =(a-b)x^ay^b\\ 
e\cdot x^{a-1}y^{b+1}&=(b+1)x^ay^b\\ 
f\cdot x^{a+1}y^{b-1} &=(a+1)x^ay^b
\end{align*}
so $x^ay^b$ is not in the image of the action of $\mathfrak{g}$ if and
only if $a\equiv b \equiv -1 \mod p$ or $a=b=0$. The claim about
$\ext^3_U(k,\nabla(qp))$ follows.

\item Suppose $r=0,m=2$.
For $q>0$ we have $\ext^2_U(k,\nabla(0)\otimes\nabla(q)^F)=0$ by
Proposition \ref{cohom_simples} as
$\nabla(0)\otimes \nabla(q)^F$ is trivial for $U$. Also
\begin{align*}
\ext^2_U(k, \nabla(p-2) \otimes \nabla(q-1)^F) &\cong
                                                 \ext^2_U(k,L(p-2))\otimes
                                                 \nabla(q-1)^F \\
&\cong \nabla(1)^F \otimes \nabla(q-1)^F,
\end{align*}
$\ext^3_U(k,\nabla(0)\otimes \nabla(q)^F) \cong
\ext^3_U(k,k)\otimes\nabla(q)^F \cong \nabla(q)^F$, and
 $\ext^3_U(k,\nabla(qp))=0$ by the previous case.  Therefore 
the long exact sequence contains
\begin{equation*}
0 \to \ext^2_U(k, \nabla (qp)) \to \nabla(1)^F \otimes
\nabla(q-1)^F \to \nabla (q)^F \to 0.
\end{equation*}
In \cite[p.416, above (6)]{deV}, De Visscher shows
$\hom_G(\nabla(q-1),\nabla(1)\otimes\nabla(q))\cong k$ so that
$\hom_G(\nabla(q-1)\otimes\nabla(1),\nabla(q))\cong k$, and (6) itself
shows that
there is a homomorphism $\nabla(1)\otimes \nabla(q-1)
\to \nabla(q)$ with kernel $\nabla(q-2)$. Because
the space of homomorphisms is one-dimensional we must have
$\ext^2_U(k,\nabla(qp))\cong \nabla(q-2)^F$.
\item Suppose $r=p-2,m=1$. Then $\hom_U(k,\nabla(qp+p-2))=0$ and
the subsequent part of the long exact sequence is 
\begin{equation*}
0 \to \nabla(q-1)^F \to 
\nabla(1)^F\otimes\nabla (q)^F  \to 
\ext^1_U(k,\nabla (qp+p-2)) \to
0
\end{equation*}
and so by \cite{deV} again this ext group is isomorphic to $\nabla (q+1)^F$.

\item Suppose $r=p-2, m=2$.  Part of the long exact
sequence is
\begin{multline*}
\ext^1_U(k,\nabla(0)\otimes\nabla(q-1)^F)\to
\ext^2_U(k,\nabla(p-2)\otimes\nabla(q)^F) \to
\ext^2_U(k,\nabla(qp+p-2)) \\ \to
\ext^2_U(k,\nabla(0)\otimes\nabla(p-2)^F)
\end{multline*}
The first and last terms are zero, as $\nabla(0)\otimes\nabla(q-1)^F$
is trivial as a $U$-module.  The second is isomorphic to
$\ext^2_U(k,\nabla(p-2))\otimes \nabla(q)^F$ as $\nabla(q)^F$ is
trivial as a $U$-module, so to $\nabla(1)^F\otimes \nabla(q)^F$ by
Proposition \ref{cohom_simples}.  It follows that $ \ext^2_U(k,\nabla
(qp+p-2))\cong \nabla(1)^F \otimes \nabla (q)^F $.

\item Suppose $r=p-2,m=3$.
We have 
\begin{align*}\ext^3_U(k,\nabla(0)\otimes\nabla(q-1)^F) &\cong
\ext^3_U(k,k)\otimes\nabla(q-1)^F\cong \nabla(q-1)^F\\ 
\ext^3_U(k,\nabla(p-2)\otimes\nabla(q)^F) &\cong
\ext^3_U(k,\nabla(p-2))\otimes \nabla(q)^F = 0,
\end{align*}
so the final two
nonzero terms of the long exact sequence give
$\ext^3_U(k,\nabla (qp+p-2)) \cong \nabla(q-1)^F$. \qedhere
\end{itemize}
\end{proof}

\subsection{$Z_0$-action on $\ext^1_U(k,\nabla)$}

\begin{prop}\label{ext1nabla}
	$\ext^1_U(k,\nabla)$ is generated as a $Z_0$-module by
$\ext^1_U(k, \nabla(2p-2))$ and $\ext^1_U(k, \nabla(2p))$.
\end{prop}
\begin{proof}
Since $\ext^1_U(k,\nabla(2n))$ is only nonzero for $n\equiv 0$ or $p-1
\mod p$, this amounts to saying that for every odd $q$ we have
$\ext^1_U(k,\nabla(qp+p-2)) \subseteq Z_0 \ext^1_U(k,\nabla(p-2))$ and
for every even $q$ we have
$\ext^1_U(k,\nabla(qp)) \subseteq Z_0 \ext^1_U(k,\nabla(2p))$.
To show this we find cocycles for the standard
resolution whose classes form bases of these ext groups and which are
equal to
powers of $x^{2p},y^{2p}$, and $x^py^p$ times cocycles with values in
$\nabla(2p-2)$ and $\nabla(2p)$. This is enough since
the action of the generators of
$Z_0$ on these spaces of cocycles is via multiplication by $x^{2p}/2,
y^{2p}/2$, and $x^py^p$. 

We record linear maps $\alpha$ on
$\gg$ as triples $(\alpha(e), \alpha(h), \alpha(f))$. A weight homogeneous
linear map $\alpha: \gg \to \nabla(2n)$ has the form
\begin{equation*}
	(\lambda_e x^{i+1}y^{2n-i-1}, \lambda_h x^i y^{2n-i}, \lambda_f
x^{i-1}y^{2n-i+1})
\end{equation*}
for some $\lambda_e,\lambda_h,\lambda_f\in k$.  The cocycle condition
(\ref{1cocycle}) translates to
\begin{align*}
	(n+1)(\lambda_f-\lambda_e) &= \lambda _h \\
n \lambda_h &= 0
\end{align*}
and the coboundary $\delta_{x^iy^{2n-i}}$ is
$((2n-i)x^{i+1}y^{2n-i-1}, 2(i-n)x^iy^{2n-i},ix^{i-1}y^{2n-i})$.


Firstly let $2n=qp+p-2$, so $\ext^1_U(k, \nabla(2n))\cong \nabla(q+1)^F$.
The maps
\begin{equation*} 
    \begin{array}{lllll}
        d_e=(y^{2p-2},0,0),& & d=(x^py^{p-2},0,-x^{p-2}y^p), & & d_f=(0,0,x^{2p-2})
    \end{array}
\end{equation*}
are cocycles for $\ext^1_U(k,\nabla(2p-2))$. 

For $q>1$ odd the $q+2$ cocycles
\[ \begin{array}{lllll} y^{(q-1)p}d_e, & & x^{2sp}y^{(q-2s-1)p}d, & &
x^{2sp}y^{(q-2s-1)p}d_f\end{array}\]
for $0\leq s\leq (q-1)/2$ are not boundaries (as any boundary has
$\lambda_e=\lambda_f$), and represent linearly independent elements of
$\ext^1_U(k,\nabla(qp+p-2))$ since
their weight degrees are distinct. They therefore form a basis of
$\ext^1_U(k,\nabla(qp+p-2))$ and are visibly in the image of the $Z_0$
action.

Now let $r=0$, so that $\ext^1_U(k,\nabla(2n)) \cong \nabla(q-1)^F
\otimes \nabla(1)^F$ has dimension $2q$. Consider the cocycles
\begin{equation*}
    \begin{array}{lll}
        \tilde E = (0,x^{2p},x^{2p-1}y), & & \tilde
H=(x^{p+1}y^{p-1},0,x^{p-1}y^{p+1}), \\ \tilde F = (xy^{2p-1},-y^{2p},0),
&& \tilde C = (x^{p+1}y^{p-1}, -2x^py^p, -x^{p-1}y^{p+1})
\end{array}
\end{equation*}
with values in $\nabla(2p)$. 
For $q>2$ even the $2q$ cocycles
\begin{equation*}
    \begin{array}{lllllll}
        x^{2sp}y^{(q-2s-2)p}E, && x^{2sp}y^{(q-2s-2)p}H,&&
        x^{2sp}y^{(q-2s-2)p}F,&& x^{2sp}y^{(q-2s-2)p}C
    \end{array}
\end{equation*}
for $0\leq s <q/2$ have weight degrees
$(4(s+1)-q)p$, $(4s+2-q)p$, $(4s-q)p$, and $(4s+2-q)p$
respectively. They are not boundaries, as there are no nonzero
boundaries with weight degree divisible by $p$ with values in
$\nabla(qp)$. They are linearly independent as the only pairs amongst
them with equal weight degree are
$x^{2sp}y^{(q-2s-2)p}E$ and $x^{2(s+1)p}y^{(q-2s-4)p}F$, and
$x^{2sp}y^{(q-2s-2)p}H$ and $x^{2sp}y^{(q-2s-2)p}C$, but 
multiples of $E$ kill $e$ while multiples of $F$ do not, and multiples
of $H$ kill $h$ while multiples of $C$ do not.
Therefore their cohomology classes span $\ext^1_U(k,\nabla(qp))$ and
are clearly in the image of the $Z_0$-action on $\ext^1_U(k,\nabla(2p))$.
%
%
\end{proof}

\subsection{$Z_0$-action on $\ext^2_U(k,\nabla)$}
\begin{prop} \label{ext2nabla}
	$\ext^2_U(k, \nabla)$ is generated as a $Z_0$-module by
$\ext^2_U(k, \nabla(2p-2))$ and $\ext^2_U(k, \nabla(2p))$.
\end{prop}

\begin{proof}
We use the same method as the proof of Proposition
\ref{ext1nabla}, and we record maps $\alpha : \wedge^2\gg \to \nabla(2n)$ as
triples \[(\alpha(e\wedge h), \alpha(e\wedge f),\alpha(h\wedge f)).\]
If such a map is weight homogeneous, it has the form
\[ (\lambda e x^{i+1}y^{2n-i-1}, \lambda h x^iy^{2n-i}, \lambda_f
  x^{i-1}y^{2n-i+1}),\]
  and from (\ref{2cocycle}) such a map is a cocycle if and only if 
\[ (i+1)\lambda_e+(2n-i+1)\lambda_f=2(i-n)\lambda_h.\]

Consider the cocycle $d=(x^{p+1}y^{p-1},0,x^{p-1}y^{p+1})$. For $q$
even, this gives
rise to $q-1$ cocycles with values in $\nabla(qp)$
\begin{equation*}
    \begin{array}{ll} x^{2sp}y^{(q-2s-2)p}d & 0\leq s<\frac{q}{2} \\
        x^py^p \cdot x^{2sp}y^{(q-2s-4)p} d & 0\leq s < \frac{q-2}{2},
    \end{array}
\end{equation*}
which are in the image of the $Z_0$ action. These cannot be
coboundaries, as all of the boundaries (\ref{2bdry}) with appropriate
weight degree vanish on $e\wedge h$ and $h\wedge f$.
Their cohomology classes are linearly independent since their weight
degrees are distinct, so they span $\ext^2_U(k,\nabla(qp)) \cong
\nabla(q-2)^F$.

Now consider the cocycles
\begin{equation*}
    \begin{array}{lll}
        \tilde R_e=(y^{2p-2},0,0),&& \tilde R_f=(0,0,x^{2p-2}), \\
        A=(x^py^{p-2},0,0), && B=(0,0,x^{p-2}y^p)
    \end{array}
\end{equation*}
which, when multiplied by $x^{2sp}y^{(q-2s-1)p}$ for $0\leq s \leq
(q-1)/2$ give rise to $2q+2$ cocycles with values in
$\nabla(qp+p-2)$.  Coboundaries with the appropriate weight degrees
again vanish on $e\wedge h$ and $h\wedge f$, and the only pairs
with the same weight degrees are $x^{2sp}y^{(q-2s-1)p}A$ and
$x^{2sp}y^{(q-2s-1)p}B$, and $x^{2sp}y^{(q-2s-1)p}\tilde R_e$ and
$x^{2(s-1)p}y^{(q-2s+1)p}\tilde R_f$ which are linearly independent
since in each case one vanishes on $e\wedge h$ and the other does
not. It follows that the classes of these cocycles, which are in the image of the $Z_0$
action on $\ext^2_U(k,\nabla(2p-2))$, span $\ext^2_U(k,\nabla(qp+p-2))\cong
\nabla(q)^F\otimes \nabla(1)^F$.
\end{proof}

\section{Connecting homomorphisms} \label{conhom}
\begin{prop}\label{w0w1}
	In the long exact sequence (\ref{les}), $\omega_0=\omega_1=0$.
\end{prop}
\begin{proof}
$\nabla^\gg$ is spanned by monomials of the form $x^{ap}y^{bp}$,
and these have 
preimages in $S^\gg$ of the form $e^{rp}h^{sp}y^{tp}$. Therefore
$\phi: S^\gg \to \nabla^\gg$ is onto and $\omega_0=0$.

To show $\omega_1=0$, we prove $\phi:\ext^1_U(k,S) \to
\ext^1_U(k,\nabla)$ is onto.  Because $\phi$ is compatible with the
$Z_0$-action it is enough to find preimages for the $Z_0$-generating set
for $\ext^1_U(k,\nabla)$ provided by Proposition \ref{ext1nabla}.
Consider the following cocycles $\gg \to S$:
\begin{equation}\label{genset1}
\begin{array}{lll}
    \delta_e = (f^{p-1},0,0) & &
\delta_f = (0,0,e^{p-1})
\end{array}\end{equation}
\begin{equation*}
 \delta = \left( \frac{h^p-hc^{(p-1)/2}}{4f} 
 ,c^{(p-1)/2}, \frac{h^p-hc^{(p-1)/2}}{4e} \right) \end{equation*}
\begin{equation*}
    \begin{array}{lll}
E = (0,2e^p,-he^{p-1}) & H=(-2eh^{p-1},0,2fh^{p-1}) &
F=(hf^{p-1},-2f^p,0) \end{array}\end{equation*}
\begin{equation*}
  C=\left(\frac{c^{(p+1)/2}-h^{p+1}}{4f},h^p, \frac{c^{(p+1)/2}-h^{p+1}}{4e}
\right).
\end{equation*}
In the notation used in the proof of Proposition \ref{ext1nabla},
$\phi(\delta_e)=d_e$, $\phi(\delta)=d$, $\phi(\delta_f)=d_f$, 
$\phi(E)=\tilde E$, $\phi(H)=\tilde H$, $\phi(F)=\tilde F$, and $\phi
(C)=\tilde C$.
\end{proof}

To deal with $\omega_2$ we need the following:
\begin{lem}
If $n\neq p-1$ and $cy \in \gg\cdot S^{n}$ then $y \in \gg\cdot S^{n-2}$.
\end{lem}
\begin{proof}
    Suppose that $cy \in \gg \cdot S^n$. We may assume
$y$ is weight homogeneous, and that its weight
degree is divisible by $p$ since otherwise it lies in the image of the
action of $h$.  As vector spaces, $S^n = cS^{n-2}\oplus K$ where $K$ is
the subspace spanned by all monomials $e^if^{n-i}$ and $e^if^{n-1-i}h$,
so we can write
\begin{equation*}
	cy =  e\cdot cy_e +  f\cdot cy_f + e\cdot d_e + f \cdot d_f
\end{equation*}
where $y_e,y_f \in S^{n-2}$ and $d_e,d_f \in K$. Replacing $y$
by $y-e\cdot y_e-f\cdot y_f$ it is enough to prove that if
\begin{equation*}
	cy = e\cdot d_e+f\cdot d_f
\end{equation*}
for $d_e,d_f\in K$ then $y\in \gg\cdot S^{n-2}$. As $y$ is
weight homogeneous of degree divisible by $p$,
\begin{align*}
    d_e &\in \operatorname{span} \{ e^{ap}
        f^{bp}e^jf^{j+1},e^{ap}f^{bp}e^{l}f^{l+1}h : a,b \geq 0,
    2j\equiv 2l+1\equiv n-1 \mod p\} \\
    d_f & \in
    \operatorname{span}\{e^{ap}f^{bp}e^{j+1}f^j,e^{ap}f^{bp}e^{l+1}f^{l}
    h : a,b\geq 0, 2j\equiv 2l+1\equiv n-1 \mod p \}
\end{align*}
Since $e \cdot e^jf^{j+1}=-f\cdot e^{j+1}f^j$ and $e\cdot
e^{l}f^{l+1}h = -f\cdot e^{l+1}f^{l}h$ we may assume $d_f=0$.
Now
\begin{align*}
	e \cdot e^jf^{j+1} &= (j+1) e^jf^jh \in K \\
e \cdot e^{l}f^{l+1}h &= (l+1)ce^{l}f^{l} -(4l+6)e^{l+1}f^{l+1}
\end{align*}
so for $e\cdot d_e \in cS^{n-2}$ it must be that $d_e$ is a multiple of $e^{ap}f^{bp}e^{l}f^{l+1}h$, and
then only when $4l+6\equiv 0 \mod p$, that is, when $n\equiv p -1 \mod
p$.  To finish the proof it is enough to show that
$e^{ap}f^{bp}e^{l}f^{l+1}h\in \gg \cdot S^{n-2}$ if $a$ or $b\neq
0$: the case $a\neq 0$ follows because
\begin{equation*}
	e^{p+(p-3)/2}f^{(p-3)/2} = e \cdot \frac{-1}{2}
\sum_{i=0}^{(p-3)/2} (-4)^{-i} h^{2i+1}e^{p + (p-5)/2 -i} f^{(p-3)/2-i}
\end{equation*}
and the $b\neq 0$ case is similar.
\end{proof}

\begin{cor} \label{w2}
    $\omega_2: \ext^2_U(k,\nabla(2n)) \to \ext^3_U(k,S^{n-2})$ is zero
    unless $n= p-1$ when it has rank one.
\end{cor}
\begin{proof}
    Because $\ext^3_U(k,S)$ can be identified with $S/ \gg \cdot S$, the
    previous lemma shows that $c:\ext^3_U(k,S^{n-2}) \to
\ext^3_U(k, S^n)$ is injective, hence $\omega_2$
is zero, except when $n=p-1$. For $m<p$, $S^m$ is self-dual
so for $m$ even
$\ext_U^3(k,S^{m}) \cong \hom_U(S^m,k)^* \cong \hom_U(k,(S^m)^*)^*
\cong \hom_U(k, S^{m})^* \cong k$.
Thus the part of the long exact sequence (\ref{les}) starting at
$\ext^2_U(k,\nabla(2(p-1))$ is
\begin{equation*}
    \nabla(1)^F\otimes\nabla(1)^F \stackrel{\omega_2}{\to} k \to k \to
\nabla(0)^F \to 0
\end{equation*}
exactness of which implies that $\omega_2$ has rank one when $n=p-1$.
\end{proof}

%
%
\section{Hochschild cohomology}
In this section we find the dimensions of the graded pieces of the
Hochschild cohomology groups $\ext^*_U(k,S)$ and generators and
relations for these groups as $Z$-modules. 

\subsection{$\HH^1(U)$}
The connecting homomorphisms $\omega_0$ and $\omega_1$ in (\ref{les})
are zero by Proposition \ref{w0w1} so for each $n$ there is a short
exact sequence
\begin{equation}\label{ses1}
0 \to \ext^1_U(k, S^{n-2})\to \ext^1_U(k,S^n) \to \ext_U^1(k,\nabla(2n))
\to 0
\end{equation}
which combined with Theorem \ref{extnabla} gives a recurrence relation
for $e_n=\dim \ext^1_U(k,S^n)$:
\begin{align*}
e_{n}-e_{n-2} &= \begin{cases}
4q  & r=0 \\
2q+3 & r=p-1 \\
0 & \text{otherwise.}
\end{cases} \end{align*}
Together with $e_0=e_1=0$ this determines the
dimensions completely.
\begin{prop}
Let $n=qp+r$ with $0\leq r<p$. Then
\begin{equation}\label{hh1recrel}
\dim \ext^1_U(k,S^n)= \begin{cases}
3\binom{q+2}{2}+4\binom{q+1}{2} & r=p-1 \\
4\binom{q+1}{2}-\binom{q}{2} & r \text{ even, } r\neq p-1\\
3\binom{q+1}{2} & r \text{ odd.}
\end{cases}
\end{equation}
\end{prop}

\begin{lem}
The classes of the cocycles (\ref{genset1}) are
a $Z$-generating set for $\ext^1_U(k,S)$.
\end{lem}
\begin{proof}
The first map in (\ref{ses1}) is multiplication by $c$, so this
follows because, as the proof of Proposition \ref{w0w1} showed, the
images of these cocycles in $\ext^1_U(k,\nabla)$ are a $Z_0$-generating
set.
\end{proof}

It remains to find the relations between these generators.  In the
following we write $\alpha \equiv \beta$ to indicate that $\alpha$ and
$\beta$ differ by a coboundary with values in $S$.
\begin{lem}\label{hh1rels}
\begin{enumerate}
	\item $e^p  F + f^p  E \equiv (1/2)h^p  H$
	\item $2e^p \delta - c^{(p-1)/2} E -h^p 
		\delta_f\equiv 0$.
	\item $2f^p \delta + c^{(p-1)/2}F-h^p \delta_e\equiv 0$.
	\item $c^{(p-1)/2}H +2e^p \delta_e -2f^p 
		\delta_f\equiv 0$.
	\item $c^{(p-1)/2}C-h^p\delta -e^p \delta_e-f^p\delta_f=0$.
	\item $c^{(p+1)/2}\delta - h^p C+e^p F-f^p E=0$.
	\item $2f^p C + h^p F -c^{(p+1)/2} \delta_e-f^p
		H \equiv 0$.
	\item $2e^p C - h^p E - c^{(p+1)/2} \delta_f +
		e^p H\equiv 0$.
\end{enumerate}
\end{lem}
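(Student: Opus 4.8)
The plan is to verify each relation by the same device. The degree-$1$ coboundaries for the resolution (\ref{ce_res}) are exactly the maps $\delta_w\colon\gg\to S$, $\delta_w(r)=r\cdot w$ with $w\in S$, and if $\alpha$ is a $1$-cocycle and $z\in Z=S^\gg$ then $r\mapsto z\cdot\alpha(r)$ is again a $1$-cocycle representing $z\bar\alpha$, since $\gg$ acts on $S$ by derivations and kills $z$. So a relation $\sum_iz_i\bar A_i=0$ among classes means exactly that the cocycle $\sum_iz_iA_i$ equals $\delta_w$ for some explicit $w\in S$, while for (5) and (6), stated without bars, the stronger claim is that $\sum_iz_iA_i$ is the zero map $\gg\to S$. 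The one algebraic fact behind everything is the identity
\[ c^p=h^{2p}+4e^pf^p. \]
This is immediate from $c=h^2+4ef$, additivity of the $p$th power map in characteristic $p$, and $4^p=4$ in $k$; together with $c^{(p+1)/2}=c\cdot c^{(p-1)/2}$ it is what makes the ``fractional'' entries $\delta(e)=\tfrac{h^p-hc^{(p-1)/2}}{4f}$, $C(e)=\tfrac{c^{(p+1)/2}-h^{p+1}}{4f}$ of Lemma \ref{1cyc} (and their $e\leftrightarrow f$ counterparts) recombine into honest polynomials.

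For (5) and (6) I would evaluate the left-hand side, as a map $\gg\to S$, on $h$ and on $e$. On $h$ the terms cancel at once: e.g.\ in (5), $c^{(p-1)/2}C(h)=c^{(p-1)/2}h^p$ cancels $h^p\delta(h)=h^pc^{(p-1)/2}$ while the $\delta_e,\delta_f$ terms vanish. On $e$ the fractional entries combine: the $C$ and $\delta$ contributions to (5) at $e$ add up to $\tfrac{c^{(p-1)/2}(c^{(p+1)/2}-h^{p+1})-h^p(h^p-hc^{(p-1)/2})}{4f}=\tfrac{c^p-h^{2p}}{4f}=e^pf^{p-1}$, which cancels $e^p\delta_e(e)=e^pf^{p-1}$, and (6) is the same collapse one polynomial degree higher. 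The value on $f$ needs no check: the involution $\sigma$ of $\gg$ with $e\mapsto f$, $f\mapsto e$, $h\mapsto -h$ (a Lie automorphism) fixes the left-hand side of (5) and negates that of (6), so vanishing on $e$ forces vanishing on $f$. Hence (5) and (6) hold identically.

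For (1)--(4), (7), (8) the left-hand side is a nonzero cocycle and one must produce the witness $w$. A direct check shows each of these cocycle combinations kills $h$, so $w$ is of weight $0$, and it is polynomially homogeneous of the degree read off from the table (e.g.\ $e^pF+f^pE-\tfrac12 h^pH\in S^{2p}$, $c^{(p-1)/2}H+2e^p\delta_e-2f^p\delta_f\in S^{2p-1}$). Inside that finite-dimensional weight space one recovers $w$ by ``integration'': its value on $e$ is a weight-$2$ element that must equal $e\cdot w$, and since $e\cdot(-)$ acts explicitly on the monomials $e^ah^bf^c$ this is a finite linear system for the coefficients of $w$. The same $\sigma$ exchanges the cocycles in each of $\{E,F\}$ and $\{\delta_e,\delta_f\}$, negates $H$ and $\delta$, fixes $c$ and $C$, and sends $e^p,f^p,h^p$ to $f^p,e^p,-h^p$; so it carries the left-hand side of (2) to minus that of (3) and of (7) to that of (8), leaving only (1), (2), (4), (7) to treat by hand, and within each the value on $f$ is forced once the value on $e$ is matched. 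The main obstacle here is bookkeeping rather than ideas: each $w$ is a telescoping combination of many monomials of the relevant bidegree, so in practice I would fix the degree and weight of $w$ first, then solve for its coefficients by matching $e\cdot w$ against the computed value of the cocycle on $e$.
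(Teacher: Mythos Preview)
Your handling of (5) and (6) is correct and complete, and matches the paper's ``follows immediately from the definitions''; the involution $\sigma$ reducing (3) to (2) and (8) to (7) is a clean observation the paper leaves as ``similar.''

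For (1), (2), (4), (7), however, you have described a procedure without carrying it out. Observing that the cocycle kills $h$, so that $w$ lies in a known finite-dimensional weight-zero piece, and then proposing to ``solve for its coefficients by matching $e\cdot w$'' is correct as a plan, but the content of the lemma is precisely that these linear systems are consistent, and you have not shown that. The paper actually exhibits the witnesses: for (2) it gives $w=-\sum_{r}h^{p-2r-1}c^r/(2r+1)$; for (4) and (7) it decomposes the cocycle into two pieces and writes each as an explicit coboundary; and for (1) it uses a device your framework does not reach, namely the element $s=(1/p)\bigl(c^p-4(ef)^p-h^{2p}\bigr)$ obtained by expanding $c^p$ over a $\mathbb{Z}$-form, dividing by $p$, and only then reducing mod $p$. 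Your identity $c^p=h^{2p}+4e^pf^p$ says exactly that the naive $k$-version of this expression vanishes, so in case (1) your ``integration'' cannot simply be read off from that identity --- one genuinely has to produce the reduced element (or an equivalent $w$). The outline is sound, but as written the four nontrivial cases remain to be done.
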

\begin{proof}
    (5) and (6), which assert an actual equality of cocycles,
    are easily checked directly.
\begin{itemize}
    \item[(1)] Let $s=(1/p)(c^p-4(ef)^p -h^{2p})$, which makes sense as an element of
$S$ if we expand $c^p$ and perform the division in an appropriate
$\mathbb{Z}$-form. Then $e^p F+f^p E -(1/2)h^p H$ is equal to $-1/4$
times the coboundary of $1\mapsto s$.
\item[(2), (3)]  Write $(a,b,c)$ for the linear map $\gg
\to S$ sending $e$ to $a$, $h$ to $b$ and $f$ to $c$. Then $2e^p
\delta - c^{(p-1)/2}E -h^p \delta_f$ is $e^p$ times the
cocycle \[\left( \frac{h^p-hc^{(p-1)/2}}{2f}, 0 ,
-\frac{h^p-hc^{(p-1)/2}}{2e}\right).\] This is the coboundary of
$1\mapsto -\sum_{r=0}^{(p-1)/3} h^{p-2r-1}c^r/(2r+1)$. Relation (3) is
similar.
\item[(4)] The cocycle $c^{(p-1)/2}H-2e^p\delta_e+2f^p\delta_f$ equals
\begin{multline*}
-h^p\left(\frac{h^p-hc^{(p-1)/2}}{2f},0,
-\frac{h^p-hc^{(p-1)/2}}{2e} \right) \\ +
c^{(p+1)/2} \left( \frac{c^{(p-1)/2}-h^{p-1}}{2f}
,0,-\frac{c^{(p-1)/2}-h^{p-1}}{2e}\right).
\end{multline*}
The first term is a coboundary as in (2), and the
second is the coboundary of $1 \mapsto \sum_{b=0}^{(p-3)/2} c^b h^{p-2(b+1)}/(2(b+1))$.
\item[(7),(8)] The cocycle $2f^p C + h^p F -c^{(p+1)/2} \delta_e -f^pH$
equals
\begin{equation*}
-f^pc\left( \frac{c^{(p-1)/2}-h^{p-1}}{2f} ,0,
-\frac{c^{(p-1)/2}-h^{p-1}}{2e} \right)
\end{equation*}
which is a coboundary as in (4), and (8) follows similarly.
\qedhere
\end{itemize}
\end{proof}

\begin{thm}
$\ext^1_U(k,S)$ is generated as a $Z$-module by the classes of the
cocycles
$\delta,\delta_e,\delta_f,E,H,F,C$ subject to the relations of Lemma \ref{hh1rels}.
\end{thm}
\begin{proof}
Let $M$ be the graded $Z$-module with these generators and relations.
Since the relations do hold in $\ext^1_U(k,S)$, it is enough to
show that the graded pieces of $M$ have dimensions less than or equal
to those of $\ext^1_U(k,S)$.
  
We first show that 
$\ext^1_U(k,S)$ is generated as a $Z_0$-module by the classes of
\begin{equation}\label{z0gens}
c^i \delta, c^i \delta_e, c^i \delta_f, c^j E, c^j H, c^j F, c^j C :
i\leq \frac{p-1}{2}, j \leq \frac{p-3}{2}
\end{equation}
and relations $c^i R$ for $i\leq (p-3)/2$ where $R$ is the first
relation of Lemma
\ref{hh1rels}. Let $N$ be the $Z_0$ module with these generators and
relations: again, it is enough to show that $N$ has the same Hilbert
series as $\ext^1_U(k,S)$.  As
$Z_0$ is polynomial in three generators of degree $p$, its degree $n$
part has dimension $d_n=\binom{n/p+2}{2}$ if $p|n$ and $0$ otherwise,
so the degree $n$ part of $N$ has dimension
\begin{equation*}
c_n=3 \sum_{i=0}^{(p-1)/2}
d_{n-(p-1)-2i}+4\sum_{i=0}^{(p-1)/2}d_{n-p-2i}-\sum_{i=0}^{(p-3)/2}d_{n-2p-2i}.
\end{equation*}
This obeys the recurrence
relation (\ref{hh1recrel}) and its initial conditions, so the claim
follows. We now need to show that the
graded pieces of $M$ have at most the dimension of those of $N$.

Consider the $Z_0$-submodule $N'$ of $M$ generated by
(\ref{z0gens}). The relations $c^i R$ hold in $M$ so the submodule
$N'$ has graded
pieces of dimension less than or equal to those of $N$. But the
relations (\ref{hh1rels}) tell us $N'=M$: the only $Z_0$-generators
for $N$ missing from $N'$ are those of the form 
$c^i \delta, c^i \delta_e, c^i \delta_f$ with $i \geq (p+1)/2$ and
$c^jE,c^jH,c^jF,c^jC$ with $j \geq (p-1)/2$, and 
relations (6) to (8) show that the former lie in $N'$ while 
relations (2) to (5) show that the latter lie in $N'$.
\end{proof}

\subsection{$\HH^2(U)$}
Propositions \ref{w0w1} and Corollary \ref{w2} show that
\begin{equation*}
0 \to \ext^2_U(k,S^{n-2})\to \ext^2_U(k,S^n) \to
\ext^2_U(k,\nabla(2n)) \to 0
\end{equation*}
is exact except when $n=p-1$ when the second map has image of
codimension 1.  Combined with Theorem \ref{extnabla} this gives a
recurrence relation for  $e_n=\dim \ext^2_U(k,S^n)$:
\begin{align}\label{recrel}
e_{n}-e_{n-2} &= \begin{cases}
3 & r=p-1,q=0 \\
4q+4 & r=p-1, q>0 \\
2q-1 & r=0 \\
0 & \text{otherwise.} 
\end{cases} \end{align}
Since $e_0=e_1=0$ this determines the dimensions completely.
\begin{prop}
Let $n=qp+r$ with $0\leq r<p$. Then
\begin{equation*}
\dim \ext^2_U(k,S^n)= \begin{cases}
3 \binom{q}{2}+\binom{q+1}{2}-\binom{q-1}{2} & r \text{ even}, r \neq
p-1 \\
3\binom{q+2}{2} & r=p-1 \\
3\binom{q+1}{2} & r \text{ odd.}
\end{cases}
\end{equation*}
\end{prop}

We now find the $Z$-module structure of $\ext^2_U(k,S)$.  
With the convention that $(a,b,c)$ denotes the map $\wedge^2\gg \to S$
sending $e\wedge h$ to $a$, $e\wedge f$ to $b$, and $h\wedge f$ to $c$,
the following are cocycles:
\begin{multline*}
    R_e = (f^{p-1},0,0),
    R_h =( eh^{p-2},0,fh^{p-2}),
    R_f =(0 ,0,e^{p-1} ),
    T =(eh^{p-1},0,   fh^{p-1}) .
\end{multline*}
%
There is a relation amongst these generators over $Z$.  In the
following, we again use $\equiv$ to denotes that two cocycles differ by
a coboundary with values in $S$.
\begin{lem}\label{hh2rel}
$e^p R_e + f^p R_f - h^p R_h \equiv c^{(p-1)/2} T$.
\end{lem}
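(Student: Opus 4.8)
The plan is to show that the difference
\[
\Theta := e^p R_e + f^p R_f - h^p R_h - c^{(p-1)/2} T
\]
is a $2$-coboundary, equivalently that $\bar\Theta=0$, by writing $\Theta=d\beta$ for an explicit $1$-cochain $\beta\colon\gg\to S$; this runs in the same spirit as the proof of relation (1) of Lemma \ref{hh1rels}. First I would evaluate $\Theta$ on the basis $h\wedge f,\ e\wedge f,\ e\wedge h$ of $\wedge^2\gg$. All four cocycles annihilate $e\wedge f$, so $\Theta(e\wedge f)=0$, while a short computation with the table shows that $\Theta(e\wedge h)$ and $\Theta(h\wedge f)$ are, respectively, $e$ and $f$ times the single polynomial $e^{p-1}f^{p-1}-h^{2p-2}-c^{(p-1)/2}h^{p-1}$ of $S$.

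Next I would isolate what a coboundary can look like here. Taking $\beta$ weight-homogeneous of weight $0$, written $(\beta(e),\beta(h),\beta(f))$ with these components of weights $2,0,-2$, a cancellation in $d\beta(e\wedge h)=e\cdot\beta(h)-h\cdot\beta(e)-\beta([e,h])$ using $h\cdot\beta(e)=2\beta(e)$ and $\beta([e,h])=-2\beta(e)$ (all dots denoting the derivation action of $\gg$ on $S$) leaves $d\beta(e\wedge h)=e\cdot\beta(h)$; similarly $d\beta(h\wedge f)=-f\cdot\beta(h)$, and $d\beta(e\wedge f)=e\cdot\beta(f)-f\cdot\beta(e)-\beta(h)$. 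Hence it is enough to produce a weight-$0$ element $\psi\in S$ with
\[
e\cdot\psi=\Theta(e\wedge h),\qquad f\cdot\psi=-\Theta(h\wedge f),
\]
and then weight $2$ and weight $-2$ elements $a,c\in S$ with $e\cdot c-f\cdot a=\psi$: the cochain $\beta=(a,\psi,c)$ then satisfies $d\beta=\Theta$.

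The two conditions on $\psi$ are compatible for formal reasons: applying the derivation $f\cdot(-)$ to $e\cdot\psi=\Theta(e\wedge h)$ and using $[f,e]\cdot\psi=-h\cdot\psi=0$ turns it into $e\cdot(-\Theta(h\wedge f))=f\cdot\Theta(e\wedge h)$, which is precisely the cocycle condition (\ref{2cocyc}) for $\Theta$. The substantive step is to write $\psi$, and the decomposition $\psi=e\cdot c-f\cdot a$, down explicitly; as in the proof of Lemma \ref{hh1rels} this uses the ``divided'' elements of $S$ such as $(h^p-hc^{(p-1)/2})/(4f)$, which are genuine polynomials because $c\equiv h^2\pmod{4ef}$, together with the element $s=(1/p)(c^p-4(ef)^p-h^{2p})$ obtained by expanding $c^p$ and performing the division in an appropriate $\ZZ$-form. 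Once $\beta$ has been written down, the equation $d\beta=\Theta$ is a direct computation, using only that $\gg$ acts on $S$ by derivations and that $c,e^p,h^p,f^p\in S^\gg$, so that multiplication by these commutes with the $\gg$-action.

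The main obstacle is simply producing $\beta$: like relation (1) of Lemma \ref{hh1rels}, this identity involves no $\times c$ map and so is not a formal consequence of the $Z$-module structure of $\hh^*(U)$, which is why it genuinely requires exhibiting a nullhomotopy rather than a dimension count. The rest is bookkeeping.
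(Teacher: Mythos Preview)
Your setup is correct and is the same approach the paper takes: compute $\Theta$ on the basis of $\wedge^2\gg$, observe it vanishes on $e\wedge f$ and equals $(fP,0,eP)$ with $P=e^{p-1}f^{p-1}-h^{2p-2}-c^{(p-1)/2}h^{p-1}$, and then exhibit a weight-zero $1$-cochain $\beta=(a,\psi,\gamma)$ with $d\beta=\Theta$. Your description of what $d\beta$ looks like for weight-zero $\beta$ is also right. The genuine gap is that you never write $\beta$ down, even though you correctly flag this as ``the substantive step''; what you offer instead as hints is misleading. The divided element $s=(1/p)(c^p-4(ef)^p-h^{2p})$ has polynomial degree $2p$, while $\beta$ must take values in $S^{2p-1}$, so $s$ cannot appear; nor are the quotients like $(h^p-hc^{(p-1)/2})/(4f)$ needed here. (Also, please do not call a component of $\beta$ ``$c$'': that symbol is already the Casimir.)

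What is actually needed is the identity $(ef)^{p-1}=(c-h^2)^{p-1}=\sum_{i=0}^{p-1}c^ih^{2p-2-2i}$ in characteristic $p$, which rewrites $P=\sum_{i=1,\,i\neq(p-1)/2}^{p-1}c^ih^{2p-2-2i}$; note the $i=0$ term is killed precisely by the $-h^pR_h$ contribution. Then since $e\cdot(c^ih^{2p-1-2i})=2(2i+1)\,e\,c^ih^{2p-2-2i}$ one may take $\psi=\sum_{i=1,\,i\neq(p-1)/2}^{p-1}\frac{c^ih^{2p-1-2i}}{2(2i+1)}$, and because $\psi$ has no $h^{2p-1}$ term it lies in the image of $(\gamma,a)\mapsto e\cdot\gamma-f\cdot a$: each $c^ih^{2p-1-2i}$ with $i\geq1$ is $e\cdot\bigl(\tfrac{1}{i}\sum_{j=0}^{i-1}fc^jh^{2p-2-2j}\bigr)$. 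The paper organises the same computation slightly differently, treating $e^pR_e+f^pR_f-c^{(p-1)/2}T$ and $h^pR_h$ separately and reducing each modulo coboundaries to $(0,h^{2p-1}/2,0)$; this has the virtue of making visible that the nontrivial residual class $(0,h^{2p-1}/2,0)$ really is what both sides contribute, rather than hiding it inside a single $\beta$.
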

\begin{proof} If $i \neq (p-1)/2$ then in the notation of
    (\ref{2bdry}),
    \begin{equation*}
        \frac{1}{2(2i+1)} t_{c^i  h^{2p-2i-2}}=
    \left(ec^ih^{2p-2i-2},-\frac{c^ih^{2p-2i-1}}{2(2+i)},fc^ih^{2p-2i-2}\right).
    \end{equation*}
    Therefore
    \begin{align*}
        e^pR_e+f^pR_f &= (e(ef)^{p-1},0,f (ef)^{p-1}) \\
                      &=\left(\sum_{i=0}^{p-1}ec^ih^{2p-2i-2} ,0,\sum_{i=0}^{p-1}fc^i
        h^{2p-2i-2}\right) \\
        & \equiv \left( ec^{(p-1)/2}h^{p-1}, \sum_{i=0,i\neq
        \frac{p-1}{2}} ^{p-1}
    \frac{c^ih^{2p-2i-1}}{2(2+i)},fc^{(p-1)/2}h^{p-1} \right)\\
    &= c^{(p-1)/2}T + \left(0,\sum_{i=0,i\neq
        \frac{p-1}{2}} ^{p-1}
    \frac{c^ih^{2p-2i-1}}{2(2+i)},0\right)
\end{align*}
If $i\neq 0$ then $(0,c^ih^{2p-2i-1},0)$ equals the coboundary $s_x$ where 
$x=\frac{1}{i} \sum _{j=0}^{i-1} f c^j h^{2p-2-2j}$. Thus
\begin{equation*}
    e^pR_e+f^pR_f-c^{(p-1)/2}T \equiv (0,h^{2p-1}/2,0).
\end{equation*}
Since 
	$t_{h^{p-1}}=(2eh^{p-2},-h^{p-1},2fh^{p-2})$, 
$h^pR_h$ differs from $(0,h^{2p-1}/2,0)$ by a coboundary and the result follows.
\end{proof}

\begin{thm}
$\ext^2_U(k,S)$ is generated as a $Z$-module by the classes of $R_e,R_f,
R_h$, and $T$ subject to the relation of Lemma \ref{hh2rel}.
\end{thm}
\begin{proof}
  
The images of these cocycles under $\phi$ are
\begin{equation*}
    \begin{array}{lll}
        \phi(R_e)=(y^{2p-2},0,0) & &
\phi(2R_h)=(x^{p}y^{p-2}, 0, -x^{p-2}y^{p})\\
\phi(R_f)=(0,0,x^{2p-2}) & &
\phi(2T)= (x^{p+1}y^{p-1}, 0, -x^{p-1}y^{p+1}).
\end{array}
\end{equation*}
To show that they generate $\ext^2_U(k,S)$ as a $Z$-module,
we need to show that their images
under $\phi$ generate $\phi( \ext^2_U(k,S)) \subset
\ext^2_U(k,\nabla)$ as a $Z_0$-module. This is slightly more
complicated than before as $\phi(\ext^2_U(k,S))$ is not all of
$\ext^2_U(k,\nabla)$.

By Proposition \ref{ext2nabla}, $\ext^2_U(k,\nabla)$ is
generated over $Z_0$ by $\ext^2_U(k,\nabla(2p))$ and
$\ext^2(k,\nabla(2p-2))$.  It follows that $\phi(\ext^2_U(k,S))$ is
generated over $Z_0$ by $\phi( \ext^2_U(k, S^{p-1}))$,
$\ext^2(k,\nabla(2p))$, and $\ext^2(k,\nabla(4p-2))$. We will show
that the images of $R_e,R_h,R_f,T$ are $Z_0$-generators for these.

The proof of Proposition \ref{ext2nabla} and the calculation above shows that
$\phi(T)$ spans $\ext^2_U(k,\nabla(2p))$ and that $\phi(R_e),
\phi(R_h),$ and $\phi(R_f)$ span $\phi(\ext^2_U(k,S^{p-1}))$.   It
also gave a set of cocycles whose images form a basis of
$\ext^2_U(k,\nabla(4p-2))$:
\begin{equation*}
    \begin{array}{ll}
        (x^{ap}y^{(4-a)p-2},0,0), & (0,0,x^{(4-a)p-2} y^{ap})
    \end{array}
\end{equation*}
for $0\leq a \leq 3$. These are in
the span of the cocycles obtained by multiplying
$\phi(R_e),\phi(R_h),\phi(R_f)$ by $x^{2p}, x^py^p, y^{2p}$.

Let $M$ be the graded $Z$-module with the given generators and
relations. Now that we know the classes of $R_e,R_h,R_f$, and $T$ generate
$\ext^2_U(k,S)$ under $Z$ and that the given relation does hold, we
only need to show that the dimensions of the graded pieces of $M$ are
less than or equal to those of $\ext^2_U(k,S)$. 

Since $Z$ is free as a $Z_0$-module on $c^i$ for $0\leq i <p$,
the dimension $f_n$ of the polynomial degree $n$ part of $Z$ is as
follows. Let $n=qp+r$ for $0\leq r<p$. Then
\begin{equation*}
f_n=	\begin{cases}
		{q+2 \choose 2}&  r \text{ even}\\
		{q+1 \choose 2}&  r \text{ odd}
	\end{cases}
\end{equation*}
so the degree $n$ part of $M$ has dimension
$3f_{n-(p-1)}+f_{n-p}-f_{n-(2p-1)}$. This obeys the recurrence
relation (\ref{recrel}) and the same initial conditions. 
\end{proof}

\subsection{$\HH^3(U)$}

Corollary \ref{w2} implies that
\begin{equation*}
0 \to \ext^3_U(k,S^{n-2}) \to \ext^3_U(k,S^n) \to \ext^3_U(k,
\nabla(2n)) \to 0
\end{equation*}
is exact except when $n=p-1$ when the first map has kernel of
dimension 1. Combined with Theorem \ref{extnabla} this gives a
recurrence relation for $e_n = \dim \ext^3_U(k,S^n)$: 
\begin{align*}
e_{n}-e_{n-2} &= \begin{cases}
1 & r=q=0 \\
2q+1 & r=p-1,q>0 \\
0 & \text{otherwise.}
\end{cases}
\end{align*}
Since $e_0=1, e_1=0$ this determines the dimensions completely.
\begin{prop}\label{hh3_dims} Let $n=qp+r$ for $0 \leq r <p$. Then
	\begin{equation*}\dim \ext^3_U(k, S^n) = \begin{cases}
		1 &n \leq p-3 \text{ even}\\
		{q+2 \choose 2} & r=p-1 \\
		{q+1 \choose 2} & r \text{ odd} \\
		{q\choose 2} & r<p-1 \text{ even}.
	\end{cases}\end{equation*}
\end{prop}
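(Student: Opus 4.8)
The plan is to follow the pattern already used for $\hh^1(U)$ and $\hh^2(U)$: push the short exact sequence (\ref{ses}) through the long exact $\ext$-sequence, use Propositions \ref{ext3} and \ref{3rd_con_hm} to extract a recurrence in $n$ for $e_n=\dim\ext^3_U(k,S^n)$, and then check that the claimed piecewise formula satisfies that recurrence and the correct initial values. First I would note that, because the resolution (\ref{ce_res}) has length three, $\ext^i_U(k,-)=0$ for $i\ge4$; hence applying $\Hom_U(k,-)$ to (\ref{ses}) and truncating gives, for every $n\ge2$, an exact sequence
\begin{equation*}
	\cdots\to\ext^3_U(k,S^{n-2})\stackrel{\times c}{\to}\ext^3_U(k,S^n)\stackrel{\phi^*}{\to}\ext^3_U(k,S^{2n}(L(1)))\to0,
\end{equation*}
in which $\phi^*$ is automatically surjective and the kernel of $\times c$ is the image of the connecting map $\ext^2_U(k,S^{2n}(L(1)))\to\ext^3_U(k,S^{n-2})$; by Proposition \ref{3rd_con_hm} that image has dimension $1$ if $n=p-1$ and $0$ otherwise. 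So, writing $[\,\cdot\,]$ for $1$ if the condition holds and $0$ otherwise,
\begin{equation*}
	e_n=e_{n-2}-[\,n=p-1\,]+\dim\ext^3_U(k,S^{2n}(L(1)))\qquad(n\ge2).
\end{equation*}

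Next I would translate Proposition \ref{ext3}: writing $n=qp+r$ with $0\le r<p$, the congruence $2n\equiv p-2\pmod p$ holds exactly when $r=p-1$, in which case $2n=(2q+1)p+(p-2)$, so the last term of the recurrence equals $2q+1$ when $r=p-1$ and is $0$ in all other cases with $n\ge2$. The two initial values come straight from (\ref{ce_res}) via $\ext^3_U(k,M)\cong M/\gg\cdot M$, exactly as in the proof of Proposition \ref{ext3}: $e_0=\dim(k/\gg\cdot k)=1$ and $e_1=\dim(\gg/[\gg,\gg])=0$ since $\ssl_2$ is perfect. It then remains to verify the four cases of the statement against the recurrence; these cases partition the pairs $(q,r)$ into $r=p-1$, $r$ odd, $\{q=0,\ r\text{ even},\ r\le p-3\}$, and $\{q\ge1,\ r\text{ even},\ r<p-1\}$. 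For $r$ odd the recurrence reads $e_n=e_{n-2}$ and $n-2$ has either residue $r-2$ with the same $q$ or (when $r=1$) residue $p-1$ with $q$ replaced by $q-1$, so $e_{n-2}=\binom{q+1}{2}$; for $q\ge1$ with $r$ even, $r<p-1$, again $e_n=e_{n-2}=\binom{q}{2}$, since $n-2$ has residue $r-2$ (even, $<p-1$) with the same $q$, or residue $p-2$ (odd) with $q$ lowered by one; for $q=0$ with $r$ even, $r\le p-3$, induction from $e_0=1$ gives $e_r=e_{r-2}=1$; and for $r=p-1$ the recurrence gives $e_n=e_{n-2}-[\,q=0\,]+(2q+1)$ where $n-2=qp+(p-3)$ has even residue $p-3<p-1$, so $e_{n-2}=1$ when $q=0$ and $\binom{q}{2}$ when $q\ge1$, and in both subcases the total is $\binom{q+2}{2}$, the case $q\ge1$ using $\binom{q}{2}+2q+1=\binom{q+2}{2}$.

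I do not expect a serious obstacle: unlike in the $\hh^1$ and $\hh^2$ sections, no new cocycles or $Z$-module relations need to be exhibited, because the top Hochschild cohomology is controlled entirely by the resolution through $\ext^3_U(k,M)\cong M/\gg\cdot M$, and the two connecting-homomorphism inputs are already available. The only point that needs care is the bookkeeping: correctly converting Proposition \ref{ext3}'s dependence on $2n\bmod p$ into a statement about $n\bmod p$, and handling the overlap between the case $n\le p-3$ even and the case $r<p-1$ even at small $q$, where the two formulas disagree and one must appeal to the recurrence to see that the first is the correct one there.
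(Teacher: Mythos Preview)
Your proposal is correct and follows essentially the same approach as the paper: extract the recurrence for $e_n$ from the long exact sequence using Propositions \ref{ext3} and \ref{3rd_con_hm}, then verify that the piecewise formula satisfies it together with the initial values. Your verification of the case split is more explicit than the paper's, which simply asserts that the claimed dimensions satisfy the recurrence, but the argument is the same.
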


It is possible to show, using similar methods to the ones in the
previous two subsections, that $\ext^3_U(k,S)$ is generated by the
classes of the cocycles
\begin{align*}
I(f\wedge h\wedge e) &= 1 \\
J(f\wedge h\wedge e)&= h^{p-1}
\end{align*}
subject to the relations $e^pI\equiv f^pI\equiv h^pI\equiv
c^{(p-1)/2}I\equiv 0$. 
    
\section{Divided powers}
Let $V$ be a $k$-vector space with basis $v_1,\ldots, v_d$.
The divided power algebra on $V$, written $D(V)$, is a
commutative algebra with a basis $v_1^{(a_1)}\cdots v_d^{(a_d)}$ for
$a_i \geq 0$ and multiplication determined by
\begin{equation*}
    v^{(a)}v^{(b)} = \binom{a+b}{a} v^{(a+b)}.
\end{equation*}
$D(V)$ is $\mathbb{Z}$-graded, with the degree $n$ part spanned by
monomials $v_1^{(a_1)}\cdots v_d^{(a_d)}$ with $\sum_i a_i = n$.
See  \cite[A2.4]{E} for more details.  

It turns out that if the symmetric algebra $S(V)$ is given  the coalgebra structure $S(V) \to
S(V)\otimes S(V)$ which sends $v\in V$ to $v\otimes 1+1\otimes v$, the
graded dual $\bigoplus S^n(V)^*$ becomes an algebra isomorphic to
$D(V^*)$.  This  allows us to extend a $\gg$ or $G$-action on $V$ to an
action by derivations or automorphisms on $D(V^*)$.

Let $V=\gg$ and write $D=D(\gg)$ and $D^n = D^n(\gg)$. We have $(D^n)^* \cong
S^n(\gg^*)\cong S^n(\gg)$ as $G$-modules, and so
\begin{equation}\label{DinvExt3}
    \ext^3_U(k,S) \cong \frac{S}{\gg \cdot S} \cong \hom_U(S,k)^* \cong
    \hom_U(k,S^*)^* \cong (D^\gg)^*,
\end{equation}
where $S^*= \bigoplus_{n=0}^\infty (S^n)^*$ denotes the graded dual of $S$. Therefore Proposition
\ref{hh3_dims} determines the dimensions of the graded pieces of the
$\gg$-invariants
on $D$. In this section we will determine the $G$-module
structure of these invariants, and the dimensions of the graded pieces
of the $G$-invariants.

\begin{lem}
$s_1 = e^{(p-1)}h^{(p-1)}f^{(p-1)}$ is $G$-invariant.
\end{lem}
\begin{proof}
This can be shown directly by computing the $G$-action and using Lucas'
theorem on binomial coefficients, or by observing that $s_1$ spans the socle
of the subalgebra of $D$ generated by $e^{(1)}, f^{(1)}$, and $h^{(1)}$
which must be preserved by $G$ since it acts by algebra automorphisms.
\end{proof}

\begin{lem}
Let $s:D\to D$ be $s(x)=s_1x$. Then $\ker s$ is spanned by all
monomials with an exponent not divisible by $p$, and $\im s \cong D^F$
as $G$-modules.
\end{lem}
\begin{proof}
  Lucas' theorem on binomial coefficients implies that if
  $0\leq r,t <p$ then
\begin{equation*}
\binom{ qp+r}{t} \equiv \binom{r}{t} \mod p
\end{equation*}
where the binomial coefficient on the right should be interpreted as 0
if $t>r$.
If any exponent of the monomial $e^{(a)}h^{(b)}f^{(c)}$ is not
divisible by $p$ then the monomial is killed by multiplication by
$s_1$, since for example if $r>0$ then
\begin{equation*}
e^{(p-1)}e^{(qp+r)}= \binom{(q+1)p+r-1}{p-1}e^{((q+1)p+r-1)} = \binom{r-1}{p-1}e^{((q+1)p+r-1)}=0.
\end{equation*}
On the other hand
\begin{align*}
    s_1 e^{(q_ep)} h^{(q_hp)} f^{(q_fp)}&=
    \binom{p-1}{p-1}\binom{p-1}{p-1}\binom{p-1}{p-1}e^{(q_ep+p-1)}
    h^{(q_hp+p-1)} f^{(q_fp+p-1)}\\ &=e^{(q_ep+p-1)} h^{(q_hp+p-1)} f^{(q_fp+p-1)}
\end{align*}
so $\ker s$ is as claimed.

Now $\im s$ is isomorphic to $D/\ker s$ which has a basis consisting of the cosets of
monomials with every exponent divisible by $p$, so is isomorphic to
$D^F$.
%
%
\end{proof}

\begin{cor}\label{dqf}
    $(D^{(q+2)p+p-3})^{G_1} \cong (D^q)^F$ as $G$-modules.
\end{cor}
\begin{proof}
  The previous lemma tells us that $s_1 D^{qp}$ is a $G$-submodule of
  $D^{(q+2)p+p-3}$ isomorphic to $(D^q)^F$. It is therefore contained
  in $(D^{(q+2)p+p-3})^{G_1}$, but the dimensions are equal by
  (\ref{DinvExt3}) and Proposition \ref{hh3_dims}.
\end{proof}

\begin{lem} \label{dn}
    Let $n=qp+r$ with $0 \leq r<p$. Then unless $r=p-1$ and $q>0$ we have
    $(D^n)^{G_1} \cong (D^{n-2})^{G_1}$ as $G$-modules.
\end{lem}
\begin{proof}
By dualising (\ref{SSnabla}) and applying $\hom_{G_1}(k,-)$ we get a
long exact sequence beginning
\begin{equation}\label{les2}
0 \to \Delta(2n)^{G_1} \to (D^n)^{G_1} \to (D^{n-2})^{G_1} \to
\ext^1_{G_1}(k, \Delta(2n)) \to \cdots
\end{equation}
where $\Delta(2n)$ denotes the Weyl module for $G$ corresponding to
the weight
$2n$. \cite[2.3, 2.6]{K} imply that as $G$-modules
\begin{align*} \Delta(2n)^{G_1} &\cong \begin{cases}
k& n=0 \\
\Delta(q-1)^F & 2n=qp + p-2 \\
0 & \text{otherwise}
  \end{cases}\\
  \ext^1_{G_1}(k,\Delta(2n)) &\cong \begin{cases}
k & n=p-1 \\
\Delta(q-2)^F & 2n=qp, q>0 \\
0 & \text{otherwise.}
\end{cases} \end{align*}

For $r\neq 0,p-1$ both the Weyl invariants and ext group in (\ref{les2})
vanish, so the result holds in these cases.  When $r=0$,
(\ref{les2}) becomes
\begin{equation*}
0 \to (D^{qp})^{G_1} \to (D^{qp-2})^{G_1} \to \cdots
\end{equation*}
(\ref{DinvExt3}) and Proposition \ref{hh3_dims} shows that the two
spaces of invariants have the same dimension, so are isomorphic.
Finally
if $n=p-1$ then $D^n \cong S^n$ and $D^{n-3} \cong S^{n-3}$, so the
spaces of invariants are both one-dimensional.
%
\end{proof}

\begin{thm}\label{g1inv}
Let $n=qp+r$ with $0\leq r<p-1$. As $G$-modules,
	\begin{equation*}
		(D^n)^{G_1} \cong \begin{cases}
			k &n\leq p-3 \text{ even,} \\
			(D^q)^F &  r=p-1,\\
(D^{q-1})^F &  r \text{ is odd,} \\
(D^{q-2})^F &  r<p-1 \text{ is even.}
		\end{cases}
	\end{equation*}
\end{thm}
\begin{proof}
    The proof is by induction on $n$, and the inductive step is
    immediate from Lemma \ref{dn} 
    except when it is attempting to prove that $(D^{qp+p-1})^{G_1}$ is
    as claimed for $q>0$.




But $(D^{(q+2)p + p-3})^{G_1} \cong (D^q)^F$ by
Lemma \ref{dqf}, and applying Lemma \ref{dn} multiple times,
\begin{multline*}
	(D^{(q+2)p+p-3})^{G_1} \cong (D^{(q+2)p + p-5})^{G_1} \cong
\cdots \cong (D^{(q+2)p})^{G_1} \\ \cong (D^{(q+1)p+p-2})^{G_1} \cong
\cdots \cong (D^{(q+1)p+1})^{G_1} \cong (D^{qp+p-1})^{G_1}
\end{multline*}
so this last space of invariants is isomorphic to $(D^q)^F$ as claimed.
\end{proof}

\begin{cor}
	$(D^n)^G = k$ if $n$ is even and 0 otherwise.
\end{cor}
\begin{proof}
Induct on $n$: for $n\leq p-1$ the divided powers agree with the
symmetric powers, so the result follows because $S^G = k[c]$. 

If $n=qp+r$ is odd then $r$ and $q$ have opposite parity. The
$G$-invariants on $D^n$ are contained in the $G_1$-invariants, and by 
Theorem \ref{g1inv} $(D^n)^{G_1}\cong (D^m)^F$ where $m<n$ is odd. By
the inductive hypothesis
the $G$-invariants in $D^m$ hence in $(D^m)^F$ are zero. 

If $n=qp+r$ is even then $r$ and $q$ have the same parity. This time
$(D^n)^{G_1}\cong (D^m)^F$ where $m<n$ is even, so by
induction the $G$-invariants in $D^m$, and hence $(D^m)^F$, are
one-dimensional.
\end{proof}

\bibliography{HHUsl2}
\bibliographystyle{amsalpha}
\end{document}